\documentclass{birkjour}

\usepackage{amsmath}
\usepackage{amsthm}
\usepackage{amssymb}
\usepackage{amsfonts}
\usepackage{amsxtra}  
\usepackage{epsfig}
\usepackage{verbatim}

\newtheorem{theorem}{Theorem}[section]
 \newtheorem{corollary}[theorem]{Corollary}
 
 \newtheorem{proposition}[theorem]{Proposition}
 
 \theoremstyle{definition}

 \theoremstyle{remark}
 
 \numberwithin{equation}{section}

\begin{document}

%
%
%
%
%
%
%
%
%

\title[Bergman projection]{Estimates of the $L^p$ norms of the Bergman projection on strongly pseudoconvex domains}
\author{ \v Zeljko \v Cu\v ckovi\' c }
\begin{abstract}
We give estimates of the $L^p$ norm of the Bergman projection on a strongly pseudoconvex domain in $\mathbb{C}^n$.  We show that this norm is comparable to $\frac{p^2}{p - 1}$ for $1 <p< \infty$.
\end{abstract}
\keywords{Bergman projection, Bergman kernel, strongly pseudoconvex domains}
\subjclass{32A25, 32A36.}
\address{Department of Mathematics and Statistics, \newline University of Toledo, Toledo,
Ohio, 43606 \newline zcuckovi@math.utoledo.edu}

\maketitle

\section{Introduction}

Let $\Omega\subset \mathbb{C}^n$ be a bounded domain and let $H(\Omega)$
denote the holomorphic functions on $\Omega$.  Let $L^2(\Omega)$ be the standard space of square-integrable functions with respect to Lebesgue measure with the usual $L^2$ inner product. The Bergman space is defined as $ A^2 (\Omega) =:H(\Omega)\cap L^2(\Omega)$.
The Bergman projection is the orthogonal projection operator $P:L^2(\Omega)\longrightarrow A^2(\Omega).$ This operator is one of the most fundamental operators in complex analysis.
The Bergman kernel function
$K(z,w)$ defined on $\Omega \times \Omega$ represents the Bergman
projection as an integral operator

$$Pf(z)=\int_\Omega K(z,w)f(w)\, dw,\qquad f\in L^2(\Omega)$$
where $dw$ denotes integration in the $w$ variables, with respect to
the euclidean volume form. 

In this paper we are interested in estimating the $L^p$ norm of the Bergman projection on strongly pseudoconvex domains.

Suppose that $\Omega$ is smoothly bounded, that is: there exists a $C^\infty$,
real-valued function $r:\,\,\text{nbhd}
\left(\overline\Omega\right)\longrightarrow\mathbb{R}$ such that
$\Omega=\{ z: r(z) <0\}$ and $dr\neq 0$ when $r=0$. If $\Omega$ is strongly 
pseudoconvex, i.e. $i\partial\bar\partial r(p)\left(\xi,\bar\xi\right) >0$ for all $p\in b\Omega$
and all vectors $\xi\in\mathbb{C}^n$ 
satisfying $\partial r(p)\left(\xi\right)=0$, the
boundary behavior of the Bergman kernel function associated to $\Omega$ is 
understood quite precisely.

For this class of domains, the mapping properties of $P$ in many classical Banach
spaces have been established.  For our purposes, we mention the classical result that $P$ is a bounded operator on $L^p(\Omega)$ for $1 < p < \infty$ \cite{Pho-Ste}.

On strongly pseudoconvex domains, Fefferman \cite{Fef} has established a complete
asymptotic expansion of $K(z,w)$, in terms of $r(z), r(w)$ and a pseudo-distance between
$z$ and $w$, as $z,w\to b\Omega$ (see also \cite{BoM-Sjo}).  A crucial feature of the Bergman kernel on a strongly pseudoconvex domain is that its singularities occur only on the boundary diagonal, instead of on the full diagonal in  $\Omega\times\Omega$. 

The simplest example of a strongly pseudoconvex domain in $ \mathbb{C}^n$ is an open unit ball $\mathbb{B}_n$.  An important precursor to our work is the result by Zhu who established a sharp estimate of the $L^p$ norm of $\|P\|$ \cite{Zhu}.  We state the unweighted version of his main theorem.

\begin{theorem}[Zhu]\label{TheoremA} 
For all $1 < p < \infty$ there exists a constant $C > 0$, depending on $n$ but not on $p$, such that the norm of $P:L^p(\mathbb{B}_n)\longrightarrow A^p(\mathbb{B}_n)$  satisfies the estimate 

$$C^{-1} \csc \frac{\pi}{p} \leq \|P\|_p \leq C \csc \frac{\pi}{p}.$$

\end{theorem}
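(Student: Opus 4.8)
Throughout, write $p'=p/(p-1)$ for the conjugate exponent and recall that the Bergman kernel of $\mathbb{B}_n$ is the explicit function $K(z,w)=c_n(1-\langle z,w\rangle)^{-(n+1)}$, $c_n>0$ a dimensional constant. The plan is to prove the two inequalities separately, keeping all constants dependent only on $n$; this is enough because $\csc(\pi/p)\asymp\max(p,p')$ on $(1,\infty)$ (the two sides are comparable as $p\to1^{+}$ and as $p\to\infty$, and both are continuous and positive on compact subintervals), so it suffices to establish the two-sided comparison $\|P\|_p\asymp\max(p,p')$.

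For the upper bound the point is that, in contrast with the Szeg\H{o} projection, here one may discard all cancellation: $|Pf|\le P^{+}(|f|)$ pointwise, where $P^{+}$ is the positive operator with kernel $|K(z,w)|=c_n|1-\langle z,w\rangle|^{-(n+1)}$, so $\|P\|_p\le\|P^{+}\|_p$. I would estimate $\|P^{+}\|_p$ by Schur's test with the power weight $h(z)=(1-|z|^2)^{-\alpha}$, $\alpha=(2\max(p,p'))^{-1}$, which makes $\alpha p\le\tfrac12$ and $\alpha p'\le\tfrac12$. Both Schur inequalities are then instances of the Forelli--Rudin estimate
$$\int_{\mathbb{B}_n}\frac{(1-|w|^2)^{t}\,dw}{|1-\langle z,w\rangle|^{\,n+1+t+c}}\;\asymp\;(1-|z|^2)^{-c}\qquad(t>-1,\ c>0),$$
and the decisive input is the size of the implied constant: for $t$ bounded away from $-1$ it blows up like $c^{-1}$ as $c\to0^{+}$. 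In one of the two Schur inequalities the relevant exponent $c$ is a fixed positive number, while in the other it is of order $1/\max(p,p')$; tracking the $c^{-1}$ factor gives $\|P^{+}\|_p\lesssim\max(p,p')\asymp\csc(\pi/p)$.

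For the lower bound, $P$ is self-adjoint, so $\|P\|_p=\|P\|_{p'}$ and $\csc(\pi/p)=\csc(\pi/p')$; also $\|P\|_p\ge1\asymp\csc(\pi/p)$ whenever $p$ stays in a compact subset of $(1,\infty)$, since $P$ is a nonzero projection. It therefore remains to treat $p\to1^{+}$. Here I would use the elementary inequality $\|P\|_p\ge\|g\|_p/\operatorname{dist}_{L^p}(g,\ker P)$, valid for every nonzero $g\in A^p$ (if $Pf=g$ then $\|g\|_p\le\|P\|_p\|f\|_p$; minimise over $f\in g+\ker P$). Since $P$ is self-adjoint and $\ker P$ is closed, the annihilator of $\ker P$ in $L^{p'}$ is exactly $A^{p'}$, so Hahn--Banach gives $\operatorname{dist}_{L^p}(g,\ker P)=\sup\{\,|\langle g,\psi\rangle|:\psi\in A^{p'},\ \|\psi\|_{p'}\le1\,\}$. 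Taking $g=K(\cdot,\zeta)$ with $\zeta\in\mathbb{B}_n$, the reproducing property yields $\langle K(\cdot,\zeta),\psi\rangle=\overline{\psi(\zeta)}$, so $\operatorname{dist}_{L^p}(K(\cdot,\zeta),\ker P)$ equals the norm of the point-evaluation functional $\psi\mapsto\psi(\zeta)$ on $A^{p'}$, which is $\asymp(1-|\zeta|^2)^{-(n+1)/p'}$ with constant uniform as $p'\to\infty$ (sub-mean-value over a Kor\'anyi box at $\zeta$). On the other hand, letting $|\zeta|\to1^{-}$, the Forelli--Rudin estimate together with the $c^{-1}$ blow-up of its constant gives $\|K(\cdot,\zeta)\|_p\gtrsim(p-1)^{-1/p}(1-|\zeta|^2)^{-(n+1)/p'}$. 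The powers of $1-|\zeta|^2$ cancel, leaving $\|P\|_p\gtrsim(p-1)^{-1/p}\asymp(p-1)^{-1}\asymp\csc(\pi/p)$ as $p\to1^{+}$, as wanted.

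The main obstacle I expect is the \emph{quantitative} Forelli--Rudin estimates: the qualitative versions are classical, but the sharp rate $\csc(\pi/p)$ --- as opposed to a cruder negative power of $p-1$ --- is produced \emph{entirely} by the fact that $\int_{\mathbb{B}_n}|1-\langle z,\zeta\rangle|^{-(n+1+c)}\,dz\asymp c^{-1}(1-|\zeta|^2)^{-c}$ as $c\to0^{+}$ and $|\zeta|\to1^{-}$, together with its weighted analogues. These I would prove by decomposing $\mathbb{B}_n$ dyadically into Kor\'anyi annuli $\{\,2^{k}(1-|\zeta|^2)\le|1-\langle z,\zeta\rangle|\le 2^{k+1}(1-|\zeta|^2)\,\}$ around the boundary point $\zeta/|\zeta|$, using the volume bound $|\{\,z\in\mathbb{B}_n:|1-\langle z,\zeta\rangle|<\rho\,\}|\asymp\rho^{\,n+1}$ for $1-|\zeta|^2\lesssim\rho\lesssim1$: the roughly $\log\frac{1}{1-|\zeta|^{2}}$ annuli contribute a geometric series of ratio $2^{-c}$ whose sum is of order $c^{-1}$. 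Once these estimates are in hand, everything else --- domination by $P^{+}$, the Schur test, and the duality formula --- is routine.
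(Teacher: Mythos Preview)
The paper does not actually prove this theorem: it is stated as Zhu's result and cited to \cite{Zhu}, serving only as motivation for the paper's own Theorems~\ref{T:Theorem1} and~\ref{T:Theorem2}. So there is no proof in the paper to compare your proposal against directly. That said, your argument is sound, and it is instructive to compare it with (i) Zhu's original proof and (ii) the techniques the paper uses for its own theorems, since Section~3 explicitly follows Zhu's idea.

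Your upper bound is essentially the ball-specialization of the paper's proof of Theorem~\ref{T:Theorem1}: Schur's test with $h(z)=(1-|z|^2)^{-\alpha}$ and a Forelli--Rudin integral, the key point being that the constant behaves like $c^{-1}$ as the ``excess'' exponent $c\to0^{+}$. The paper takes $\alpha=1/(pp')$ (so both Schur integrals carry exponent $1/p$ or $1/p'$ and the constant is exactly $pp'=p^2/(p-1)$), whereas you take $\alpha=1/(2\max(p,p'))$; the outcomes are equivalent.

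Your lower bound, however, takes a genuinely different route from Zhu's (and from the paper's Section~3). Zhu uses the bounded test function $f(z)=\log(iz_n)-\overline{\log(iz_n)}=2i\arg(iz_n)$, observes that $Pf=\log(iz_n)-P(\overline{\log(iz_n)})$ with the second term bounded, and then evaluates at a point $z_p$ at distance $\sim e^{-p}$ from the boundary so that $|\log(iz_{p,n})|\sim p$; the point-evaluation estimate with the Euclidean radius then costs only a constant $e^{-2n}$ independent of $p$. You instead test with reproducing kernels $K(\cdot,\zeta)$ and use the duality identity $\operatorname{dist}_{L^p}(K(\cdot,\zeta),\ker P)=\|\mathrm{ev}_\zeta\|_{(A^{p'})^*}$, together with the anisotropic sub-mean-value bound $|\psi(\zeta)|\lesssim(1-|\zeta|^2)^{-(n+1)/p'}\|\psi\|_{p'}$. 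Both arguments are correct. Zhu's logarithm trick is more elementary (no Hahn--Banach, no anisotropic geometry) and is what generalizes to strongly pseudoconvex domains in Section~3; your kernel-and-duality approach is cleaner on the ball but relies on the explicit reproducing structure and the sharp nonisotropic point-evaluation bound, which would be harder to transplant to a general domain.
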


Zhu has also restated his results in the following way:  there exists a constant $C > 0$ independent of $p$ such that 

$$C^{-1} \frac{p^2}{p-1} \leq \|P\|_p \leq C \frac{p^2}{p-1}.$$

We will prove estimates of $\|P\|_p$  for $P:L^p(\Omega)\longrightarrow A^p(\Omega),$ in case $\Omega$ is strongly pseudoconvex. The first theorem gives the upper estimate that is analogous to the estimate of Zhu.

\begin{theorem}\label{T:Theorem1} 
Let $\Omega$ be a smoothly bounded strongly pseudoconvex domain.  For all $1 < p < \infty$ there exists a constant $C' > 0$, depending on $n$ but not on $p$, such that the norm of $P:L^p(\Omega)\longrightarrow A^p(\Omega)$  satisfies the estimate 

$$ \|P\|_p \leq C' \frac{p^2}{p-1}.$$

\end{theorem}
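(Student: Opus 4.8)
The plan is to replace $P$ by the auxiliary operator $P^{+}f(z)=\int_{\Omega}|K(z,w)|f(w)\,dw$, which satisfies $\|Pf\|_{p}\le\|P^{+}(|f|)\|_{p}$ and hence $\|P\|_{p}\le\|P^{+}\|_{L^{p}\to L^{p}}$, and then to estimate $\|P^{+}\|_{p}$ by Schur's test with a suitable power of $|r|$ as auxiliary function, tracking the $p$-dependence of every constant. This is exactly the route Zhu takes on $\mathbb{B}_{n}$, where the explicit kernel $c_{n}(1-\langle z,w\rangle)^{-(n+1)}$ turns the two Schur integrals into Beta functions; the point here is to feed the same machine with Fefferman's asymptotic description of $K$ in place of the ball kernel.

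The first ingredient is the pointwise kernel bound furnished by \cite{Fef} (cf.\ \cite{BoM-Sjo}): because the singularities of $K$ sit only on the boundary diagonal, there is a nonnegative function $N(z,w)$ on $\Omega\times\Omega$, symmetric in its arguments, with
$$N(z,w)\;\gtrsim\;|r(z)|+|r(w)|+|z-w|^{2}$$
(the quadratic term reflecting positivity of the Levi form), such that $|K(z,w)|\lesssim N(z,w)^{-(n+1)}$ uniformly on $\Omega\times\Omega$; off the boundary diagonal $K$ is bounded and $N$ is bounded below, so the estimate costs nothing there, while near the boundary $N$ is comparable to the non-isotropic pseudodistance in Fefferman's expansion.

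The technical core is a Forelli--Rudin type estimate with explicit control of the constant: there is $C=C(n,\Omega)$ such that for all $-1<s\le 0$ and $0<t\le 1$,
$$\int_{\Omega}\frac{|r(w)|^{s}}{N(z,w)^{\,n+1+s+t}}\,dV(w)\;\le\;\frac{C}{(s+1)\,t}\,|r(z)|^{-t},\qquad z\in\Omega.$$
I would prove this by the standard localization near $b\Omega$: a finite cover by coordinate charts in which $N$ takes its model form, a subordinate partition of unity, and then evaluation of the resulting iterated one-dimensional integrals — in a boundary-distance variable, an imaginary-normal variable, and tangential variables — exactly as on the ball; the tangential and normal integrations collapse the denominator to the boundary-distance scale, and the last integration produces a factor $B(s+1,t)=\Gamma(s+1)\Gamma(t)/\Gamma(s+1+t)\lesssim\frac{1}{(s+1)t}$, while the contribution of $\Omega$ away from the boundary diagonal is only $O((s+1)^{-1})$. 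I expect this lemma to be the main obstacle: the unquantified form is standard, but honestly tracking the blow-up of the constant as $s\to-1$ and $t\to 0$, on a general strongly pseudoconvex domain rather than the ball, is what takes work.

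Granting the lemma, apply Schur's test to $P^{+}$ with $h(z)=|r(z)|^{-1/(pp')}$, where $p'=p/(p-1)$. Then $h^{p'}=|r|^{-1/p}$ and $h^{p}=|r|^{-1/p'}$, so $\int_{\Omega}|K(z,w)|h(w)^{p'}\,dV(w)$ and $\int_{\Omega}|K(z,w)|h(z)^{p}\,dV(z)$ are instances of the lemma with $(s,t)=(-1/p,1/p)$ and $(s,t)=(-1/p',1/p')$ respectively. Since $s+1=1/p'$ and $t=1/p$ in the first case (and symmetrically in the second), the lemma bounds both Schur constants by $C\,(1/p')^{-1}(1/p)^{-1}=C\,pp'$, whence
$$\|P\|_{p}\le\|P^{+}\|_{p}\le (Cpp')^{1/p'}(Cpp')^{1/p}=C\,pp'=C\,\frac{p^{2}}{p-1},$$
which is the asserted bound. (It is worth noting that $B(1/p,1/p')=\Gamma(1/p)\Gamma(1-1/p)=\pi\csc(\pi/p)\asymp pp'$, so the argument reproduces the shape of the estimate in Theorem~\ref{TheoremA}.)
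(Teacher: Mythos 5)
Your proposal follows essentially the same route as the paper: Schur's test with $h(z)=|r(z)|^{-1/(pq)}$, the Fefferman--McNeal pointwise kernel bound, and a weighted integral estimate whose constant is tracked as $\frac{1}{\epsilon(1-\epsilon)}$ --- your Forelli--Rudin lemma at $(s,t)=(-1/p,1/p)$ is exactly Proposition~\ref{P:Prop2.7}, and the localization/iterated-integration proof you sketch is the one the paper carries out. One caution: the displayed isotropic bound $N(z,w)\gtrsim |r(z)|+|r(w)|+|z-w|^{2}$ is by itself too weak to give the lemma (it would only yield $|r(z)|^{-(n+1)/2-t}$ on the right-hand side); the argument genuinely needs the complex-normal difference $|z_1-w_1|$ to enter at first power, as in (\ref{E:2.6}), which your references to the non-isotropic pseudodistance and to a separate imaginary-normal integration indicate you intend to use.
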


The next theorem gives lower estimates for the norm of $P$ for more general domains than strongly pseudoconvex domains.  Recall that 
a domain $\Omega$ satisfies the Condition R if $P$ maps the space $C^{\infty} 
(\overline\Omega)$ into itself. 

\begin{theorem}\label{T:Theorem2}
Let $\Omega\subset \mathbb{C}^n$ be a smoothly bounded domain and assume that the Condition R holds on $\Omega$.
Then there exists $2 < p_0 < \infty$ and a constant $C' > 0$, depending on $n$ but not on $p$, such that 

$$C' \frac{p^2}{p-1} \leq \|P\|_p $$
for all $p \in (1, q_0) \cup (p_0, \infty)$,  with $\frac 1p_0 +\frac 1q_0 =1.$

\end{theorem}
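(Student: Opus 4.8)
\emph{Plan of proof.} The plan is to reduce, by duality, to a lower bound for large $p$, and then to transplant the extremal construction behind Zhu's theorem (Theorem \ref{TheoremA}) to a strongly pseudoconvex boundary point of $\Omega$. Since $P$ is the orthogonal projection of $L^2(\Omega)$ onto $A^2(\Omega)$ it is self-adjoint, so $\|P\|_p=\|P\|_{p'}$ for $1<p<\infty$ with $\tfrac1p+\tfrac1{p'}=1$; and $\tfrac{p^2}{p-1}$ is comparable to $\tfrac{(p')^2}{p'-1}$, both being comparable to $\csc\tfrac{\pi}{p}$, which is invariant under $p\mapsto p'$. Hence it suffices to find $p_0>2$ and $c>0$ with $\|P\|_p\ge c\,p$ for all $p>p_0$; the estimate for $p\in(1,q_0)$ then follows by conjugation, and the band $(q_0,p_0)$ around $2$, on which $\tfrac{p^2}{p-1}$ is bounded, is simply left out.

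Next I would recall, in a transplantable form, why $\|P_{\mathbb B_n}\|_p\gtrsim_n p$. For $a\in\mathbb B_n$ with $\delta:=1-|a|^2$ small and a scale $\delta\le\rho\le1$, the unimodular function $f_{a,\rho}=\chi_{\{|1-\langle a,w\rangle|\le\rho\}}\,\overline{K(a,\cdot)}/|K(a,\cdot)|$ is supported on a Koranyi ball of volume $\asymp\rho^{n+1}$, so $\|f_{a,\rho}\|_p\asymp\rho^{(n+1)/p}$; by the reproducing property $Pf_{a,\rho}(a)=\int_{|1-\langle a,w\rangle|\le\rho}|K(a,w)|\,dw\asymp\int_\delta^\rho \tfrac{ds}{s}=\log(\rho/\delta)$; and the local comparability $K(z,w)\approx K(a,w)$ for $z$ near $a$ (in size and argument) keeps $|Pf_{a,\rho}|\gtrsim\log(\rho/\delta)$ on a Koranyi ball around $a$ of volume $\asymp_n\delta^{n+1}$ (possibly a smaller ball, which only affects the dimensional constant $c_n$ below). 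Thus $\|Pf_{a,\rho}\|_p/\|f_{a,\rho}\|_p\gtrsim\log(\rho/\delta)\,(\delta/\rho)^{c_n/p}$, and maximizing $t\mapsto t\,e^{-c_n t/p}$ gives value $\asymp p/c_n$. The point is that this argument uses \emph{only} two facts about the kernel near the base point: $K(z,z)\asymp\delta(z)^{-(n+1)}$, and $|K(z,w)|\asymp d(z,w)^{-(n+1)}$ with controlled argument for $z,w$ in a common Koranyi ball, where $d$ is the strongly pseudoconvex pseudodistance.

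To transplant this to $\Omega$, first note that every smoothly bounded domain has a strongly convex — a fortiori strongly pseudoconvex — boundary point $\zeta_0$: taking $q\in\Omega$ and $\zeta_0\in b\Omega$ farthest from $q$, the domain $\Omega$ lies inside the ball $B(q,|\zeta_0-q|)$ and is tangent to it at $\zeta_0$, so $\Omega$ is at least as convex there as that sphere. Near $\zeta_0$, $\Omega$ is squeezed between an inscribed ball and this circumscribed ball, both tangent to $b\Omega$ at $\zeta_0$, which already gives $K_\Omega(z,z)\asymp\delta(z)^{-(n+1)}$ for $z$ near $\zeta_0$. For the off-diagonal estimate, Condition R enters: under it the Bergman kernel of $\Omega$ localizes near $\zeta_0$ — its singularity sits only on the boundary diagonal and $K_\Omega-K_{\Omega\cap U}$ is controlled on a neighbourhood $U$ of $\zeta_0$ — so Fefferman's expansion for a strongly pseudoconvex model yields $|K_\Omega(z,w)|\asymp d(z,w)^{-(n+1)}$, with controlled argument, near $\zeta_0$. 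With both facts in hand, I would run the construction above with $K_\Omega$ in place of $K_{\mathbb B_n}$, with $a\to\zeta_0$ along the inward normal and $\rho$ kept inside $U$ while $\delta\to0$, obtaining $\|P\|_p\gtrsim p$; the threshold $p_0$ is whatever makes the optimal scale $\rho\asymp\delta\,e^{c_n p}$ fit inside $U$ for an admissible $\delta$, together with the error estimates.

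I expect the main obstacle to be precisely that off-diagonal estimate: getting $|K_\Omega(z,w)|\asymp d(z,w)^{-(n+1)}$ near $\zeta_0$ for a domain assumed only to satisfy Condition R, not global pseudoconvexity. The crude Cauchy and Cauchy–Schwarz bounds are far too lossy (they would send $Pf_{a,\rho}(a)$ to $0$ rather than to $\log(\rho/\delta)$), so one must genuinely invoke a localization principle in order to import Fefferman's asymptotics to a single strongly pseudoconvex boundary point of $\Omega$; everything else — the duality reduction and the level-set optimization — is routine.
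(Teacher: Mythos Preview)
Your approach is genuinely different from the paper's, and considerably more technical. The paper does \emph{not} transplant Zhu's kernel-based extremal functions and never needs off-diagonal asymptotics for $K_\Omega$. Instead, after a local holomorphic change of coordinates so that $0\in b\Omega$ and $\operatorname{Im} z_n>0$ is the outward normal, it takes the single test function $f(z)=\log(iz_n)-\overline{\log(iz_n)}=2i\arg(iz_n)$, which is bounded with $\|f\|_p\le 2\pi$. Since $\log(iz_n)\in A^2(\Omega)$, one has $Pf=\log(iz_n)-P(\overline{\log(iz_n)})$. Condition~R enters through the Herbig--McNeal--Straube theorem: $P\bar g\in C^\infty(\overline\Omega)$ for any $g\in A^2(\Omega)$, so $|P(\overline{\log(iz_n)})|\le M$ on $\Omega$. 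Evaluating at $z_p=(0,\dots,0,-ie^{-p})$ and using only the elementary point-evaluation bound $|h(z)|\le C(n)\,d(z)^{-2n/p}\|h\|_p$ gives $\|Pf\|_p\gtrsim d(z_p)^{2n/p}|Pf(z_p)|\gtrsim e^{-2n}(p-M)\gtrsim p$ for $p$ large; duality handles $p$ near $1$.

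What each approach buys: the paper's argument is short, uses no kernel estimates whatsoever, and invokes Condition~R through a clean black box (projection of antiholomorphic $L^2$ functions is smooth). Your plan, by contrast, hinges on importing Fefferman's two-sided asymptotics---including the phase---to a single strongly pseudoconvex boundary point of a domain assumed only to satisfy Condition~R. You correctly identify this localization as the crux, but the claim that ``under Condition~R the Bergman kernel of $\Omega$ localizes near $\zeta_0$'' and that $K_\Omega-K_{\Omega\cap U}$ is controlled is not standard at this generality (the usual localization theorems assume global pseudoconvexity together with a regularity hypothesis on the $\bar\partial$-Neumann problem), and ``controlled argument'' of $K_\Omega(z,w)$ near the boundary diagonal is itself delicate even on strongly pseudoconvex domains. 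So your route may be completable, but as written the localization step is a genuine gap, and the paper's device bypasses it entirely.
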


Since our domain $\Omega$ is strongly pseudoconvex, it does satisfy the Condition R.  The following corollary follows immediately.

\begin{corollary}\label{T:Corollary 3}
	Let $\Omega\subset \mathbb{C}^n$ be a smoothly bounded strongly pseudoconvex domain.
Then there exists $2 < p_0 < \infty$ and a constant $C' > 0$, depending on $n$ but not on $p$, such that 

$$C' \frac{p^2}{p-1} \leq \|P\|_p $$
for all $p \in (1, q_0) \cup (p_0, \infty)$,  with $\frac 1p_0 +\frac 1q_0 =1.$

\end{corollary}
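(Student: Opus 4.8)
The plan is to deduce Corollary~\ref{T:Corollary 3} directly from Theorem~\ref{T:Theorem2} together with the classical fact that every smoothly bounded strongly pseudoconvex domain satisfies Condition~R. Concretely, since $\Omega$ is assumed smoothly bounded and strongly pseudoconvex, the regularity theory for the $\bar\partial$-Neumann problem (Kohn's subelliptic estimates on strongly pseudoconvex domains) yields that the Bergman projection $P$ maps $C^\infty(\overline\Omega)$ into itself; this is precisely Condition~R. Hence the hypotheses of Theorem~\ref{T:Theorem2} are met, and its conclusion applies verbatim to $\Omega$.

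Thus I would argue as follows. First, invoke Condition~R for $\Omega$: this is standard, following from the global regularity of the $\bar\partial$-Neumann operator $N$ on strongly pseudoconvex domains and the Kohn formula $P = I - \bar\partial^* N \bar\partial$, which shows $P$ preserves $C^\infty(\overline\Omega)$. Second, apply Theorem~\ref{T:Theorem2} with this $\Omega$: there exist $p_0 \in (2,\infty)$ and a constant $C' > 0$ depending only on $n$ such that $C' \frac{p^2}{p-1} \leq \|P\|_p$ for all $p \in (1, q_0) \cup (p_0, \infty)$, where $q_0$ is the conjugate exponent of $p_0$. Third, observe that nothing further needs to be done: the stated inequality and the description of the exponent range are exactly the conclusion of Theorem~\ref{T:Theorem2}, so the corollary is proved.

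There is essentially no obstacle here, since the corollary is a specialization of the preceding theorem; the only content is the implication ``strongly pseudoconvex $\Rightarrow$ Condition~R,'' and that is a well-known consequence of Kohn's work. The sketch of proof would therefore consist of one or two sentences citing this implication and then appealing to Theorem~\ref{T:Theorem2}.
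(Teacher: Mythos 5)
Your proposal is correct and matches the paper exactly: the paper simply notes that a smoothly bounded strongly pseudoconvex domain satisfies Condition R (by Kohn's regularity theory) and then deduces the corollary immediately from Theorem~\ref{T:Theorem2}. Your additional detail about the Kohn formula $P = I - \bar\partial^* N \bar\partial$ is a fine justification of that standard implication.
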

Notice that the lower estimates are obtained for $p$ sufficiently large (or
sufficiently close to 1). We do not know if there is an underlying
reason for this restriction. 

The advantage of working on the open unit ball is that an explicit formula for the Bergman kernel is known. On strongly pseudoconvex domains we only have estimates for the Bergman kernel and it seems that the type of results in Theorem \ref{T:Theorem1} and \ref{T:Theorem2} has not been considered before. 

The paper is laid out as follows. In Section 2 we prove the upper estimate on $\|P\|_p$, while
in Section 3 we prove the lower estimate.

{\bf Acknowledgment.} 
The author would like to thank Sonmez Sahutoglu for several discussions during the preparation of this work and to Akaki
Tikaradze for pointing out a geometric argument in the lower estimate.
 
\section{Upper estimate}

The work in this section was motivated by the work \cite{McN-Ste} and the related work on Toeplitz operators in \cite{Cuc-McN}.  The proof of our upper estimate depends on the Schur lemma applied to the Bergman kernel.

\begin{proposition}\label{P:Prop2.1} 
Suppose $\mu$ is a positive measure on a space $X$ and $K(x, y)$ is a nonnegative measurable function on $X \times X$.
Let $1< p <\infty$ be given and let $q$ be the conjugate exponent
of $p$, $\frac 1p +\frac 1q =1$.  Suppose there exists $C>0$ and a positive function $h(x)$ on $X$ such that
\begin{equation}\label{E:2.2}
  \int_X K(x,y) h(y)^q d \mu(y) \leq
  C h(x)^q
\end{equation}
for almost all $x$ in $X$ and
\begin{equation}\label{E:2.3}
  \int_X K(x,y) h(x)^p d \mu(x) \leq
  Ch(y)^p
\end{equation} 
for almost all $y$ in $X$. 
Then the integral operator
\begin{equation}\label{E:2.4}
Tf(x)=\int_X K(x,y)f(y)\, d \mu(y)
\end{equation}
is bounded on $L^p(X, \mu)$ with norm not exceeding $C$. 
\end{proposition}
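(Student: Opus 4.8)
The plan is to derive the bound from a pointwise application of Hölder's inequality inside the kernel integral, followed by an interchange of integration. First I would take $f\in L^p(X,\mu)$, reduce to the case $f\ge 0$ (replacing $f$ by $|f|$ only increases $|Tf|$ pointwise), and factor the kernel in the integrand defining $Tf(x)$ as
$$ K(x,y)f(y) = \bigl(K(x,y)^{1/q}h(y)\bigr)\cdot\bigl(K(x,y)^{1/p}h(y)^{-1}f(y)\bigr), $$
which is legitimate since $h$ is strictly positive and since on the set where $K(x,y)=0$ both factors vanish. Applying Hölder's inequality with exponents $q$ and $p$ to these two factors gives
$$ Tf(x) \le \Bigl(\int_X K(x,y)h(y)^q\,d\mu(y)\Bigr)^{1/q}\Bigl(\int_X K(x,y)h(y)^{-p}f(y)^p\,d\mu(y)\Bigr)^{1/p}. $$

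Next I would insert hypothesis \eqref{E:2.2} into the first factor, bounding it by $\bigl(Ch(x)^q\bigr)^{1/q}=C^{1/q}h(x)$, so that
$$ Tf(x)^p \le C^{p/q}\,h(x)^p\int_X K(x,y)h(y)^{-p}f(y)^p\,d\mu(y). $$
Integrating in $x$ against $\mu$ and applying Tonelli's theorem — all integrands are nonnegative, so the interchange is valid with no integrability hypothesis needed in advance — lets me pull the $y$-integral outside and leaves the inner integral $\int_X K(x,y)h(x)^p\,d\mu(x)$, which hypothesis \eqref{E:2.3} bounds by $Ch(y)^p$. This yields
$$ \|Tf\|_p^p \le C^{p/q}\cdot C\int_X h(y)^{-p}f(y)^p\,h(y)^p\,d\mu(y) = C^{p/q+1}\|f\|_p^p, $$
and since $\tfrac{p}{q}+1 = p\bigl(\tfrac1q+\tfrac1p\bigr)=p$, the right side equals $C^p\|f\|_p^p$, giving $\|Tf\|_p\le C\|f\|_p$ as claimed.

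There is no deep obstacle here — this is the classical Schur test — so the ``hard part'' is really just care with the measure-theoretic bookkeeping: justifying the kernel factorization and the use of Hölder on the set where $K$ vanishes (or is infinite on a $\mu$-null set, which the hypotheses preclude from mattering); invoking Tonelli rather than Fubini so the swap of integration order is legitimate before we know $Tf\in L^p$; and tracking the exponents so that the constants $C^{1/q}$ and $C$ combine to exactly $C$ rather than a larger power. In the application to the Bergman projection, $K(x,y)$ will be $|K(z,w)|$, $\mu$ Lebesgue measure on $\Omega$, and the auxiliary function $h$ will be built from (a power of) the boundary distance, chosen using Fefferman's kernel estimates so that \eqref{E:2.2} and \eqref{E:2.3} hold with $C$ of the right order in $p$.
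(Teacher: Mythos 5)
Your proof is correct and is exactly the standard argument for the Schur test that the paper invokes without proof (it cites this as ``the Schur lemma''): Hölder applied to the factorization $K^{1/q}h\cdot K^{1/p}h^{-1}f$, then Tonelli and the second hypothesis, with the exponents combining as $C^{p/q+1}=C^p$. Nothing to add.
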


If $\Omega$ is smoothly bounded strongly pseudoconvex domain,  the earlier work in \cite{Hor} contained only estimates on the Bergman kernel restricted to the diagonal.  As we have already mentioned Fefferman \cite{Fef} has established a complete
asymptotic expansion of $K(z,w)$ on $\Omega$. For our purposes we now recall only the upper bounds on the Bergman kernel. 

\begin{proposition}\label{P:Prop2.5}
 Let $\Omega=\{ r<0\}$ be a smooth, bounded, strongly pseudoconvex
domain in $\mathbb{C}^n$. Let $K(z, w)$ denote the Bergman kernel of $\Omega.$ For each $p\in b\Omega$, there exists a neighborhood $U$ of $p$, holomorphic coordinates $(\zeta_1,\dots ,\zeta_N)$ and a constant $C>0$, such that
for $z,w\in U\cap\Omega$
\begin{equation}\label{E:2.6}
 |K(z,w)|\leq C\left( |r(z)|+|r(w)|+ |z_1- w_1| +\sum_{k=2}^n 
 |z_k -w_k|^2\right)^{-(n+1)}.
\end{equation}
Here $z=(z_1,\dots , z_n)$ in the $\zeta$-coordinates, and similarly for $w$.
\end{proposition}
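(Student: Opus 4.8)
The plan is to localise the estimate near the boundary and then read it off Fefferman's asymptotic expansion of the Bergman kernel, using only the leading-order size of its main term.

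First I would invoke the classical fact (due to Kerzman) that on a smooth bounded strongly pseudoconvex domain the Bergman kernel $K(z,w)$ extends to a function that is $C^\infty$ on $\overline\Omega\times\overline\Omega$ minus the boundary diagonal $\Delta=\{(\zeta,\zeta):\zeta\in b\Omega\}$. In particular $K$ is bounded on the complement of any fixed neighbourhood of $\Delta$, and on that set \eqref{E:2.6} is immediate since its right-hand side is bounded away from $0$. Since $b\Omega$ is compact, a neighbourhood of $\Delta$ is covered by finitely many product sets $U\times U$ with $U$ a small neighbourhood of a point $p\in b\Omega$, so it is enough to prove \eqref{E:2.6} for $z,w\in U\cap\Omega$.

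Next I would fix the local holomorphic coordinates $\zeta=(\zeta_1,\dots,\zeta_n)$ on $U$. After a translation sending $p$ to $0$, a unitary rotation, and a polynomial holomorphic change of variables — at which stage strong pseudoconvexity is used to put the Levi form of $r$ at $p$ into the identity and to eliminate the pluriharmonic quadratic terms of $r$ — one arranges that $U\cap\Omega$ is, to second order at $0$, the Siegel model domain $\{\,2\,\mathrm{Re}\,\zeta_1+|\zeta_2|^2+\cdots+|\zeta_n|^2<0\,\}$, whose Bergman kernel is the explicit function $c_n\bigl(-(\zeta_1+\bar w_1)-2\sum_{k=2}^n\zeta_k\bar w_k\bigr)^{-(n+1)}$. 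In these coordinates Fefferman's theorem \cite{Fef} (see also \cite{BoM-Sjo}) gives, on $U\cap\Omega$,
\[
  K(z,w)=\frac{\Phi(z,w)}{\rho(z,w)^{\,n+1}}+\Psi(z,w)\,\log\rho(z,w),
\]
where $\Phi,\Psi$ are smooth up to the boundary of $(U\cap\Omega)\times(U\cap\Omega)$ and $\rho$ is an almost-analytic continuation off the diagonal of the function governing the complex Monge--Ampère asymptotics; in particular $\rho(z,w)$ agrees with the model expression above modulo higher-order terms. The remaining point is the elementary comparison — carried out by completing the square in the Levi form and using $z,w\in\Omega$ — showing that
\[
  |\rho(z,w)|\ \geq\ c\,\Bigl(|r(z)|+|r(w)|+|z_1-w_1|+\textstyle\sum_{k=2}^n|z_k-w_k|^2\Bigr)
\]
for $z,w$ near $0$ and some $c>0$. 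Granting this, the main term of $K$ is $\lesssim|\rho(z,w)|^{-(n+1)}$, which is dominated by the right-hand side of \eqref{E:2.6}; the logarithmic term is absorbed using $|\log\rho(z,w)|\leq C_\varepsilon|\rho(z,w)|^{-\varepsilon}$ with $\varepsilon=\tfrac12$, together with the boundedness of $\Psi$ and of $U\cap\Omega$, so that $|\Psi(z,w)\log\rho(z,w)|\lesssim|\rho(z,w)|^{-1/2}\leq C|\rho(z,w)|^{-(n+1)}$. Summing the two contributions yields \eqref{E:2.6}.

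The main obstacle is that \eqref{E:2.6} is, in essence, a one-sided form of Fefferman's expansion, so any genuinely self-contained argument must reproduce its core — either a parametrix for the Bergman projection constructed from the solution operator of the $\bar\partial$-Neumann problem, or, if one wishes to avoid microlocal machinery, a scaling argument in which the non-isotropic dilations $(\zeta_1,\zeta_2,\dots,\zeta_n)\mapsto(t\zeta_1,\sqrt t\,\zeta_2,\dots,\sqrt t\,\zeta_n)$ exhibit $U\cap\Omega$, rescaled near $0$, as a small perturbation of the Siegel model, whose Bergman kernel is manifestly of the required size; in that approach the delicate point is the uniformity, as $t\to 0$, of the convergence of the rescaled kernels. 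Since only the upper bound in \eqref{E:2.6} is needed in this paper, quoting Fefferman's theorem is the efficient route, and the work that remains — the two displayed comparisons and the absorption of the logarithm — is elementary.
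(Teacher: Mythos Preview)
Your proposal is correct and in fact more detailed than what the paper does: the paper gives no proof of this proposition at all, but simply records that inequality~\eqref{E:2.6} ``can be extracted from the results in \cite{Fef}'' and may also ``be obtained by simpler methods as shown by \cite{McN} and \cite{N-R-S-W}.'' Your sketch via Fefferman's asymptotic expansion is exactly the first of these citations made explicit, and the scaling alternative you describe at the end is precisely the content of the second group of references; so both routes you identify match the paper's two cited sources.
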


Inequality (\ref{E:2.6}) can be extracted from the results in \cite{Fef}; it may also 
be obtained by simpler methods as shown by \cite{McN} and \cite{N-R-S-W}, see the remark (5.3) in \cite{McN}.

In what follows we use the notation $f(z)\lesssim g(z)$ to denote that there
exists a constant $C$, independent of $z$ and $\epsilon$, such that $f(z)\leq Cg(z)$.

In order to apply the Schur lemma, we need the following proposition.
  
\begin{proposition}\label{P:Prop2.7} Let $\Omega=\{ r<0\}$ be a smooth, bounded, strongly
pseudoconvex domain in $\mathbb{C}^n$, and let $K(z,w)$ be the 
Bergman kernel associated to $\Omega$. 

Then 
\begin{equation}\label{E:2.8}
 \int_\Omega \left| K(z,w)\right||r(w)|^{-\epsilon}\, dw
 \lesssim  \frac{1}{\epsilon (1 - \epsilon)} |r(z)|^{-\epsilon},
\end{equation}
for all $0 <\epsilon <1$.
\end{proposition}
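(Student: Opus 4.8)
The plan is to estimate the integral locally near the boundary using the explicit kernel bound (\ref{E:2.6}), and then dispose of the compact interior part trivially. Cover $b\Omega$ by finitely many neighborhoods $U_j$ as in Proposition \ref{P:Prop2.5}, together with one open set $U_0\Subset\Omega$, so that $\overline\Omega\subset\bigcup_{j=0}^M U_j$. On $U_0$ the kernel is bounded (singularities of $K$ occur only on the boundary diagonal) and $|r(w)|^{-\epsilon}$ is integrable with a bound uniform in $\epsilon\in(0,1)$ since $|r|$ is bounded below there, so that contribution is $\lesssim 1 \lesssim \tfrac{1}{\epsilon(1-\epsilon)}|r(z)|^{-\epsilon}$. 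Hence it suffices to bound, for each boundary chart, the integral of the right-hand side of (\ref{E:2.6}) against $|r(w)|^{-\epsilon}$ over $U_j\cap\Omega$.

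First I would set up adapted real coordinates on $U_j\cap\Omega$: writing $z_1=x_1+iy_1$, use $(t,s,u_2,\dots,u_{2n-1})$ where $t=|r(w)|$, $s=\mathrm{Im}\,w_1$ (the "hard" complex-tangential direction scales linearly), and $u_2,\dots,u_{2n-1}$ the remaining $2n-2$ real coordinates coming from $w_2,\dots,w_n$ (which scale quadratically). In these coordinates $dw \asymp dt\,ds\,du$, and the quantity in parentheses in (\ref{E:2.6}) is comparable to
$$ \delta(z,w) := |r(z)| + t + |s - s_0| + \sum_{k=2}^{n}|u_k - u_k^0|^2, $$
where $(s_0,u^0)$ are the coordinates of $z$. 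So I must bound
$$ \int |r(z)|^{-\epsilon}\!\!\int_0^{c} t^{-\epsilon}\, \delta(z,w)^{-(n+1)}\, dt\, ds\, du. $$
Wait — the weight $|r(w)|^{-\epsilon}=t^{-\epsilon}$ sits inside. The key step is the one-dimensional computation in $t$: with $a = |r(z)| + |s-s_0| + \sum |u_k-u_k^0|^2 \ge 0$,
$$ \int_0^\infty t^{-\epsilon}(a+t)^{-(n+1)}\,dt \;=\; a^{-(n+1)-\epsilon+1}\int_0^\infty \sigma^{-\epsilon}(1+\sigma)^{-(n+1)}\,d\sigma \;\lesssim\; \frac{1}{1-\epsilon}\, a^{-n-\epsilon}, $$
the $\tfrac{1}{1-\epsilon}$ coming from the convergence of $\int_0^1\sigma^{-\epsilon}\,d\sigma$ at $0$ (the tail at $\infty$ converges uniformly since $n\ge 1$). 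Then integrate $a^{-n-\epsilon}$ in the $2n-1$ remaining real variables: the $s$-integral contributes one power, each of the $n-1$ variables $u_k$ contributes a half-power after the substitution $u_k-u_k^0 = |r(z)|^{1/2}v_k$ (or a rescaling that trades the quadratic term), and one checks $\int_{\mathbb{R}^{2n-1}} (|r(z)| + |s| + \sum|u|^2)^{-n-\epsilon}\,ds\,du \lesssim \tfrac{1}{\epsilon}\,|r(z)|^{-\epsilon}$, with the $\tfrac1\epsilon$ arising from the last radial integration being of the form $\int_0^1 \rho^{-1-\epsilon}\rho\,d\rho$ — i.e. $\int_0^1\rho^{-\epsilon}\,d\rho$ near the endpoint that now genuinely sits at the boundary. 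Combining the two factors gives $\tfrac{1}{\epsilon(1-\epsilon)}|r(z)|^{-\epsilon}$, as claimed.

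The main obstacle is bookkeeping the dependence on $\epsilon$ carefully through the iterated integration so that exactly the factor $\tfrac{1}{\epsilon(1-\epsilon)}$ emerges and no spurious blow-up as $\epsilon\to 0$ or $\epsilon\to 1$ creeps in from the other variables — in particular making sure that the quadratic directions $u_2,\dots,u_n$ and the interior cutoff contribute only $\epsilon$-uniform constants, so that the two genuine singularities (the $t$-integral at its lower limit, giving $\tfrac{1}{1-\epsilon}$, and the final tangential radial integral at the boundary, giving $\tfrac1\epsilon$) are the only sources of the stated factor. A secondary technical point is justifying the comparability $dw\asymp dt\,ds\,du$ and the equivalence of $\delta(z,w)$ with the parenthetical expression in (\ref{E:2.6}) uniformly over the finitely many charts; this is standard once the coordinates $(\zeta_1,\dots,\zeta_n)$ from Proposition \ref{P:Prop2.5} are fixed, since the change from $\zeta$-coordinates to the real coordinates above has bounded Jacobian above and below on $U_j\cap\Omega$.
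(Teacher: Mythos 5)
Your proposal is correct and follows essentially the same route as the paper: cover $b\Omega$ by the charts of Proposition \ref{P:Prop2.5}, insert the kernel bound (\ref{E:2.6}), evaluate the resulting iterated integral in adapted coordinates, and dispose of the interior piece using the boundedness of $K$ off the boundary diagonal. The only difference is the order of integration --- you integrate the normal variable $t=|r(w)|$ first (producing the factor $\tfrac{1}{1-\epsilon}$) and the $2n-1$ tangential variables second (producing the $\tfrac{1}{\epsilon}$), whereas the paper integrates the tangential variables first and extracts both factors from the final one-dimensional integral $\int(|r(z)|+|x|)^{-1}|x|^{-\epsilon}\,dx$ split at $|r(z)|$; both orders yield the stated constant $\tfrac{1}{\epsilon(1-\epsilon)}$.
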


\begin{proof} 
The proof follows the lines of Lemma 1 in \cite{McN-Ste}.  We will skip certain steps to avoid the repetition.  
Let $\Delta_b=\{ (z,z): z\in b\Omega\}$ be the boundary diagonal
of $\overline\Omega\times\overline\Omega$. It is well known that
\begin{equation}\label{E:2.9}
  K(z,w)\in C^\infty\left( \overline\Omega\times\overline\Omega\setminus
  \Delta_b\right)
\end{equation}
see \cite{Ker}.

Cover $b\Omega$ by neighborhoods $U_1,\dots U_M$ given by
Proposition \ref{P:Prop2.5}; we may assume that the neighborhoods are so small that
the quantity in parenthesis on the right hand side of (\ref{E:2.6}) is less than 1. 

Now consider an arbitrary $U_j$, $1\leq j\leq M$ and let $z,w\in
\overline\Omega\cap U_j$.

Assume $z\in U_j$ is temporarily fixed. Then Proposition \ref{P:Prop2.5} gives
\begin{equation*}
\begin{split}
 I_j&=\int_{U_j}\left| K(z,w)\right||r(w)|^{-\epsilon}\, dw
\\ &\leq C\int_{\mathbb{C}^n}
\left( |r(z)|+|r(w)|+ |z_1- w_1| +\sum_{k=2}^n 
|z_k -w_k|^2\right)^{-(n+1)} |r(w)|^{-\epsilon}\, dw.
\end{split}
\end{equation*}

As in \cite{McN-Ste}, we change the coordinates: $\widetilde w_k = w_k- z_k$, $k=2,\dots , n$, 
$\text{Re }
\widetilde w_1 = r(w)$, $\text{Im }\widetilde w_1 = \text{Im } w_1$. Now let $x=\text{Re 
}\widetilde w_1$ and $y=\text{Im }z_1
-\text{Im } w_1$. We then obtain 

\begin{equation}\label{E:2.10}
I_j\leq C\int_{\mathbb{C}^n}\left( |r(z)|+|x|+ |y| +\sum_{k=2}^n 
|\widetilde w_k|^2\right)^{-(n+1)} |x|^{-\epsilon}\, d\widetilde w_2\dots d\widetilde w_n\,
dx\, dy.
\end{equation}

We start evaluating this iterated integral by performing the $\widetilde w_2$ integration first.
Define

\begin{eqnarray*}
 R_1 &=&\left\{ \widetilde w_2: |\widetilde w_2|^2 >
|r(z)|+|x|+ |y| +\sum_{k=3}^n 
|\widetilde w_k|^2\right\} \\
R_2 &= &\left\{ \widetilde w_2: |\widetilde w_2|^2 <
|r(z)|+|x|+ |y| +\sum_{k=3}^n 
|\widetilde w_k|^2\right\}.
\end{eqnarray*}
Using polar coordinates on the region $R_1$ we have
\begin{equation*}
\begin{split}
 \int_{R_1} &\left( |r(z)|+|x|+ |y| +\sum_{k=2}^n 
|\widetilde w_k|^2\right)^{-(n+1)} |x|^{-\epsilon}\, d\widetilde w_2 \\
&\leq \int_{R_1} \left(|\widetilde w_2|^2\right)^{-(n+1)} |x|^{-\epsilon}\, d\widetilde w_2 \\
&\lesssim \left(|r(z)|+|x|+ |y| +\sum_{k=3}^n 
|\widetilde w_k|^2\right)^{-n}|x|^{-\epsilon}.
\end{split}
\end{equation*}
On the region $R_2$ we obtain the same upper bound by using the estimate
\begin{equation*}
\begin{split}
 \int_{R_2} &\left( |r(z)|+|x|+ |y| +\sum_{k=2}^n 
|\widetilde w_k|^2\right)^{-(n+1)} |x|^{-\epsilon}\, d\widetilde w_2  \\
&\leq\left(|r(z)|+|x|+ |y| +\sum_{k=3}^n 
|\widetilde w_k|^2\right)^{-(n+1)}|x|^{-\epsilon}\,\text{vol}(R_2). \\
\end{split}
\end{equation*}

We continue in the same way to perform the integration on $d\widetilde w_3, \dots
d\widetilde w_n$ and $dy$ integrals, reducing one negative power
of the integrand at each step, to obtain
$$I_j\lesssim \int_\mathbb{ R}\left(|r(z)| +|x|\right)^{-1}|x|^{-\epsilon}\, dx.$$

We now estimate this final integral:

\begin{equation*}
\begin{split}
 2 \int_ 0^{\infty}&\left( |r(z)|+x 
\right)^{-1} x^{-\epsilon}\, dx\\
&=2 \int_{|r(z)|}^{\infty} \left( |r(z)|+x 
\right)^{-1} x^{-\epsilon}\, dx + 2 \int_{0}^{|r(z)|} \left( |r(z)|+x 
\right)^{-1} x^{-\epsilon}\, dx  \\
&\leq 2 \int_{|r(z)|}^{\infty}  x^{-\epsilon - 1}\, dx + 2 \int_{0}^{|r(z)|} |r(z)|^{-1} x^{-\epsilon}\, dx  \\
& = \frac{1}{\epsilon} |r(z)|^{-\epsilon} +  \frac{1}{1 - \epsilon} |r(z)|^{-\epsilon} \\
&= \frac{1}{\epsilon (1 - \epsilon)} |r(z)|^{-\epsilon}. \\
\end{split}
\end{equation*}
Thus we obtain 

$$I_j\lesssim \frac{1}{\epsilon (1 - \epsilon)} |r(z)|^{-\epsilon}.$$

If $U_0=\overline\Omega\setminus\cup_{j=1}^M U_j$ then,
it follows from (\ref{E:2.9}) that there exists $M > 0$ so that $|K(z, w)| \leq M$ for all $z, w \in U_0$.  Similarly for $w \in U_0$, there exists $\eta_1$ and $\eta_2$ such that $ \eta_1 \leq |r(w)| \leq \eta_2$ for all $w \in U_0$. Hence 

\begin{equation*}
\begin{split}
\int_{U_0} \left| K(z,w)\right||r(w)|^{-\epsilon}\, dw
&\lesssim \eta_1^{-\epsilon} \\
& \lesssim (\frac{\eta_2}{\eta_1})^{\epsilon} |r(z)|^{-\epsilon}\\
& \lesssim |r(z)|^{-\epsilon}\\
\end{split}
\end{equation*}
since $\frac{\eta_2}{\eta_1} > 1$ and $0 < \epsilon < 1$.

Thus we obtained the same bounds on each of the open sets $U_0, \dots ,
U_M$, we have shown (\ref{E:2.8}).  
\end{proof}

We are now prepared to give the proof of the upper estimate.

\begin{proof}[Proof of the upper estimate in Theorem \ref{T:Theorem1}]
We apply the Schur lemma to the function $h(z) = |r(z)|^{-\frac{1}{pq}}.$
Then (\ref{E:2.2}) becomes

\begin{equation*}
\int_\Omega |K(z,w)|h(w)^q dw = \int_\Omega |K(z,w)||r(z)|^{-\frac{1}{p}} dw.\\
\end{equation*}

Now apply Proposition 2.7 with $\epsilon = \frac{1}{p}$ to obtain for all $z \in {\Omega}$ 

\begin{equation*}
\begin{split}
\int_\Omega |K(z,w)|h(w)^q dw
&\lesssim  pq |r(z)|^{-\frac{1}{p}}\\
&= \frac{p^2}{p - 1}h(z)^q.\\
\end{split}
\end{equation*} 

By the symmetry of the kernel and the estimate $pq$ on the right hand side, (\ref{E:2.3}) is satisfied for all $w \in {\Omega}$ with the same upper bound.  We conclude that 
$$\|P\|_p \lesssim \frac{p^2}{p-1}.\qedhere$$
\end{proof}

\section{Lower estimate}

After doing a holomorphic change of coordinates, for simplicity, we can assume that $0$ is in the boundary of $\Omega$ and $y_n = \text{Im}z_n > 0$ is the outward real normal. Let us consider the case $p >2.$  Now for each $p$ choose $z_p$ to be the point $z_p = (0, 0, \dots, -ie^{-p}).$ Then $z_p \in \Omega$ for $p$ large enough.

Now we follow the idea from \cite{Zhu}.  Let 
$$f(z) = \log (iz_n) - \overline{\log(iz_n)}.$$  Clearly $f(z) = 2i \arg(iz_n)$ is a bounded function on 
$\Omega$ and hence $\|f\|_p \leq 2\pi.$ 

It is not difficult to see that $\log (iz_n) \in A^2(\Omega)$. Hence $Pf(z) = \log (iz_n) - 
P(\overline{\log(iz_n)}).$ Now we recall a recent result from \cite{HMS}, Corollary 1.12. 

\begin{theorem}[Herbig-McNeal-Straube]\label{T:Theorem3.1}
Let $\Omega\subset \mathbb{C}^n$ be a smoothly bounded domain and assume that the Condition R holds on $\Omega$. Then for any $f \in A^2(\Omega)$, $P \overline f$ is smooth up to the boundary.
\end{theorem}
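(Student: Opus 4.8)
This is Corollary~1.12 of \cite{HMS}, so one option is simply to invoke it; let me nonetheless indicate the argument I would give, especially as for the strongly pseudoconvex domains needed in Corollary~\ref{T:Corollary 3} it can be run directly from the kernel estimates of Section~2. The case $f\in H(\Omega)\cap C^\infty(\overline\Omega)$ is trivial: then $\overline f\in C^\infty(\overline\Omega)$ and Condition~R gives $P\overline f\in C^\infty(\overline\Omega)$. The whole content lies in allowing an arbitrary $f\in A^2(\Omega)$, whose conjugate need not even be continuous up to $b\Omega$; in the intended application $f=\log(iz_n)$ blows up at $0$.

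The starting point is the integral representation
\begin{equation*}
 P\overline f(z)=\int_\Omega L(z,\zeta)\,\overline{f(\zeta)}\,dV(\zeta),\qquad
 L(z,\zeta):=\int_\Omega K(z,w)\,K(\zeta,w)\,dV(w),
\end{equation*}
which follows by substituting the conjugate of the reproducing property, $\overline{f(w)}=\int_\Omega K(\zeta,w)\,\overline{f(\zeta)}\,dV(\zeta)$, into $P\overline f(z)=\int_\Omega K(z,w)\overline{f(w)}\,dV(w)$ and applying Fubini. The kernel $L$ is holomorphic in $z$ and in $\zeta$ on $\Omega\times\Omega$ and satisfies $|L(z,\zeta)|\le\sqrt{K(z,z)\,K(\zeta,\zeta)}<\infty$. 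Differentiating under the integral sign in $z$ and using the Cauchy--Schwarz inequality in $\zeta$,
\begin{equation*}
 \bigl\|\partial_z^\alpha\bigl(P\overline f\bigr)\bigr\|_{L^2(\Omega)}\le
 \|f\|_{L^2(\Omega)}\,\bigl\|\partial_z^\alpha L\bigr\|_{L^2(\Omega\times\Omega)},
\end{equation*}
and since $P\overline f$ is holomorphic this gives $\|P\overline f\|_{W^s(\Omega)}\le C_s\,\|f\|_{L^2(\Omega)}$ whenever $\partial_z^\alpha L\in L^2(\Omega\times\Omega)$ for $|\alpha|\le s$. Sobolev embedding then puts $P\overline f$ in $C^k(\overline\Omega)$ for every $k$, hence in $C^\infty(\overline\Omega)$. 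Thus the entire problem is reduced to one statement about a single object: the kernel $L$ is regular up to the boundary, say $\partial_z^\alpha L\in L^2(\Omega\times\Omega)$ for all $\alpha$.

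To obtain this I see two routes, and either way it is the crux. For a general Condition~R domain one argues as in \cite{HMS}: from $\langle P\overline f,g\rangle=\overline{\int_\Omega fg\,dV}$ for $f,g\in A^2(\Omega)$ one sees that the map $f\mapsto P\overline f$ is governed by the symmetric bilinear form $(f,g)\mapsto\int_\Omega fg\,dV$ on the Bergman space, and the point of \cite{HMS} is that Condition~R for $P$ transfers through this bilinear duality into boundary regularity of that form, equivalently of $L$. For a strongly pseudoconvex $\Omega$ one can instead use the facts already recorded in Section~2: $K\in C^\infty\bigl(\overline\Omega\times\overline\Omega\setminus\Delta_b\bigr)$ by \eqref{E:2.9}, while $|K|$ obeys the Fefferman-type bound \eqref{E:2.6}; splitting the defining $w$-integral for $L$ near the two singular regions $w\approx z$ and $w\approx\zeta$ and using the reproducing property to absorb each diagonal singularity modulo a smooth remainder, one expects $L\in C^\infty(\overline\Omega\times\overline\Omega)$, and in particular the bounds on $\partial_z^\alpha L$ needed above. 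That the regularity of $L$ is a genuine phenomenon, and not a soft consequence of having $P$ bounded on $L^2$, is already clear from the fact that the trivial bound $|L|\le\sqrt{K(z,z)K(\zeta,\zeta)}$ is not even in $L^2(\Omega\times\Omega)$ (the ``trace'' $\int_\Omega K(z,z)\,dV$ diverges), and from the absence of any cheap estimate $\|u\|_{W^s(\Omega)}\lesssim\|u\|_{L^2(\Omega)}$ on $A^2(\Omega)$ (already $u=z_1^k$ on the ball violates it): the gain must come from real cancellation in the $w$-integral, i.e.\ from Condition~R itself.
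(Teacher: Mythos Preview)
The paper does not prove this statement; it quotes it as Corollary~1.12 of \cite{HMS} and uses it as a black box. Your first sentence already matches the paper's entire treatment, so at that level the proposal is correct and identical to the paper.

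The additional sketch you offer, however, has a real gap at its acknowledged crux. The reduction to the kernel $L(z,\zeta)=\int_\Omega K(z,w)K(\zeta,w)\,dV(w)$ is valid, and the estimate $\|\partial_z^\alpha(P\overline f)\|_{L^2}\le\|f\|_{L^2}\,\|\partial_z^\alpha L\|_{L^2(\Omega\times\Omega)}$ does reduce everything to regularity of $L$. But you do not prove that regularity. Route (a) simply points back to \cite{HMS}, which is circular. Route (b) is where the gap sits: the size bound \eqref{E:2.6} on $|K|$ alone cannot control $\partial_z^\alpha L$, because differentiating $K(z,w)$ in $z$ worsens the singularity by one order per derivative, and putting absolute values inside the $w$-integral already fails at order zero (as you yourself observe, the trivial bound $|L|\le\sqrt{K(z,z)K(\zeta,\zeta)}$ is not in $L^2(\Omega\times\Omega)$). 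The phrase ``using the reproducing property to absorb each diagonal singularity modulo a smooth remainder'' is the right instinct---the gain has to come from holomorphic structure, not from size---but actually carrying it out requires either the full Fefferman expansion with its phase information (not merely the upper bound recorded in Proposition~\ref{P:Prop2.5}), or Condition~R applied to smooth localizations of $\overline{K_\zeta}$, which in the general case is again the content of \cite{HMS}. Your final paragraph diagnoses this correctly; the sketch just does not supply the missing cancellation argument.
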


We now use this theorem to conclude that there exists a constant $M > 0$ so that $|P(\overline{\log(iz_n)})| < 
M$ on $\Omega$.

It is well known that point evaluations are bounded on $A^p(\Omega)$, see for ex. \cite{Kr}, Lemma 1.4.1 modified for arbitrary $p > 1$ or \cite{Dur-Sch} in case of planar domains.  More concretely, for a $z\in {\Omega}$ we have 

$$|f(z)| \leq C(n) d(z)^{-\frac{2n}{p}} \|f\|_p$$
for all $f \in A^p(\Omega)$, where $d(z)$ denotes the distance of $z$ to the boundary.  We will apply this inequality to $f(z)$ with the point $z_p$ chosen above.  Hence we obtain
\begin{align}\label{E:3.4}
\nonumber \|Pf\|_p &\geq C(n) d(z_p)^{\frac{2n}{p}} |Pf(z_p)|\\
& \gtrsim (e^{-2n}) |Pf(z_p)|\\
\nonumber & \gtrsim \left| |\log(e^{-p})| - |P(\overline{\log(iz_n))(e^{-p})}|\right| \\
\nonumber & > p - M \\
\nonumber & \gtrsim p
\end{align}
for $p$ large enough.  Hence we conclude that there exists a $p_0 > 2$ such that $\|Pf\|_p \gtrsim p$ for all $p > p_0$.  This shows that 
$$\frac{\|Pf\|_p}{\|f\|_p} \gtrsim p$$
for $p > p_0$ which shows that $\|Pf\|_p > Cp$, with $C$ depending on $n$ but not on $p > p_0$.  Since $p > 2$, we have $p > \frac{1}{2} \frac{p^2}{p-1}$ which gives 
$$\|Pf\|_p > C'\frac{p^2}{p-1},$$
with $C'$ depending on $n$ but not on $p > p_0$.  This gives the lower estimate in our theorem.  
By duality and the symmetry of $pq$ we get the estimate for the range $p \in (1, q_0)$.


\begin{thebibliography}{99}

\bibitem{BoM-Sjo}
L. Boutet de Monvel \& J. Sj\"ostrand,
\emph{Sur la
singularit\'e des noyaux de Bergman et de Szeg\"o},
Soc. Math. France
Ast\'erisque
\textbf{34-35} (1976), 123-164

\bibitem{Cuc-McN}
 \v Z. \v Cu\v ckovi\' c \& J. D. McNeal,
 \emph{Special Toeplitz operators on strongly pseudoconvex domains},
Rev. Mat. Iberoamericana \textbf{22} (2006), 851-866

\bibitem{Dur-Sch}
P. Duren \& A. Schuster,
\emph{Bergman spaces},
Math. Surveys and Monographs \textbf{100},  Amer. Math. Soc. (2004)

\bibitem{Fef}
C. Fefferman,
\emph{The Bergman kernel and biholomorphic
mappings of pseudoconvex domains}, 
Inv. Math. \textbf{26}  (1974), 1-65

\bibitem{HMS}
 A.-K.Herbig, J.D. McNeal \&  E.J. Straube,
\emph{Duality of holomorphic function spaces and smoothing properties of the
Bergman projection}, 
Trans. Amer. Math. Soc. \textbf{366}  (2014), 647-665

\bibitem{Hor}
L. H\"ormander,
\emph{$L^2$ estimates and existence theorems for the
$\overline\partial$-operator},
Acta Math. \textbf{113} (1965),   89-152
 

\bibitem{Ker}
N. Kerzman,
 \emph{The Bergman kernel function. Differentiability at the
boundary},  Math. Ann. \textbf{195}  (1972),   149-158

\bibitem{Kr}
S. G. Krantz
\emph{Function theory of several complex variables}, Sec. Ed.,
 AMS Chelsea Publishing 2001.

\bibitem{McN}
J.D. McNeal,
\emph{Boundary behavior of the Bergman kernel
function in $\mathbb{C}^2$},
Duke Math. J. \textbf{58}  (1989), 499-512


\bibitem{McN-Ste}
 J.D. McNeal \&  E.M. Stein,
\emph{Mapping properties of the
Bergman
projection on convex domains of finite type}, 
Duke Math. J. \textbf{73}  (1994), 177-199

\bibitem{N-R-S-W}
A. Nagel, J.P. Rosay, E.M. Stein, \& S. Wainger,
\emph{Estimates for the Bergman and Szeg\"o kernels in $\mathbb{C}^2$},
 Ann. of Math. \textbf{129}  (1989),  113-149

\bibitem{Pho-Ste}
D.H. Phong \& E.M. Stein,
\emph{Estimates for
the Bergman and Szeg\" o projections on strongly pseudoconvex domains},
Duke Math. J. \textbf{44} (1977),  695-704


\bibitem{Zhu}
Kehe Zhu,
\emph{A sharp norm estimate for
the Bergman projection on $L^p$ spaces},  Bergman spaces and related topics in complex analysis, Contemp. Math
 \textbf{404} (2006),  199-205

\end{thebibliography}
\end{document}